\newcommand{\heuteIst}{May 24, 2011 }
\swapnumbers \theoremstyle{plain} 
\newtheorem{theorem}{Theorem}
\newtheorem{proposition}[theorem]{Proposition}
\theoremstyle{definition} 
\newtheorem{definition}[theorem]{Definition}
\theoremstyle{remark}
\newcommand{\complexs}{\mathbb{C}} 
\newcommand{\naturals}{\mathbb{N}} 
\newcommand{\integers}{\mathbb{Z}} 
\newcommand{\rationals}{\mathbb{Q}}
\DeclareMathOperator{\id}{id} 
\DeclareMathOperator{\Mat}{M}
\newcommand{\abs}[1]{\left\lvert#1\right\rvert} 
\newcommand{\onto}{\twoheadrightarrow}
\newcommand{\subgroup}{\leq}
\newcommand{\normalsubgroup}{\lhd}
\DeclareMathOperator{\lcm}{lcm}
\DeclareMathOperator{\tr}{tr} 
\DeclareMathOperator{\pr}{pr}
\newcommand{\forget}[1]{} 
\newcommand{\innerprod}[1]{\langle #1 \rangle}
\global\let\c@equation=\c@theorem}
\begin{document} \pagestyle{myheadings} \markboth{Peter Linnell, Boris Okun, Thomas Schick}{Atiyah conjecture for Coxeter groups}

\date{Last compiled \today; last edited \heuteIst or later}

\title{The strong Atiyah conjecture for right-angled Artin and Coxeter groups}

\author{Peter Linnell\thanks{ \protect\href{mailto:plinnell@math.vt.edu}{e-mail: plinnell@math.vt.edu}\protect\\
\protect\href{http://www.math.vt.edu/people/plinnell/}{www: http://www.math.vt.edu/people/plinnell/}\protect\\
partially supported by a grant from the NSA.} \\
Math.~Dept.\\Virginia Tech\\
Blacksburg, VA 24061-0123, USA 
\and 
Boris Okun\thanks{\protect\href{mailto:okun@uwm.edu}{e-mail: okun@uwm.edu} \protect\\
\protect\href{http://www.uwm.edu/\string~okun}{www:~http://www.uwm.edu/\kern -.15em\lower .85ex\hbox{\~{}}\kern .04em okun} 
}
\\
Department of Mathematical Sciences\\
University of Wisconsin--Milwaukee\\
Milwaukee, WI 53201, USA 
\and 
Thomas Schick\thanks{ \protect\href{mailto:schick@uni-math.gwdg.de}{e-mail: schick@uni-math.gwdg.de} \protect\\
\protect\href{http://www.uni-math.gwdg.de/schick}{www:~http://www.uni-math.gwdg.de/schick} \protect\\
partially funded by the Courant Research Center ``Higher order structures in Mathematics" within the German initiative of excellence }\\
Mathematisches Institut\\
Georg-August-Universit\"at G{\"o}ttingen\\
Germany\\
} 
\maketitle
\begin{abstract}
	We prove the strong Atiyah conjecture for right-angled Artin groups and right-angled Coxeter groups.
	More generally, we prove it for groups which are certain finite extensions or elementary amenable extensions of such groups.
\end{abstract}

When Atiyah introduced $L^2$-Betti numbers for compact manifolds (later generalized to finite CW-complexes), he asked \cite{Atiyah}*{p.~72} about the possible values these can assume.
In particular, he asked whether they are always rational numbers, or even integers if the fundamental group is torsion free.
This question was later popularized in precise form as ``the strong Atiyah conjecture''. Easy examples show that the possible values depend on the fundamental group $G$ of the space in question.
For a subgroup of order $n$ in $G$, a compact manifold with $\pi_1(M)=G$ and
with $L^2$-Betti number $1/n$ can easily be constructed.
For certain groups $G$ which contain finite subgroups of arbitrarily large order, with quite some effort manifolds $M$ with $\pi_1(M)=G$ and with transcendental $L^2$-Betti numbers have been constructed \cites{Austin,Grabowski,Pichot-Schick-Zuk}.

The $L^2$-Betti numbers can be computed from the cellular chain complex.
The chain groups there are of the form $l^2(G)^d$, and the differentials are given by convolution multiplication with a matrix over $\integers[G]$.
Let $\mathbb{N}$ denote the positive integers.
Then the strong Atiyah conjecture for CW-complexes with fundamental group $G$ turns out to be equivalent to the following (with $K=\integers$):
\begin{definition}\label{DAtiyah}
	Let $G$ be a group with a bound on the orders of finite
subgroups and let $\lcm(G)\in\naturals$ denote the least common multiple of these orders.
	Let $K\subset\complexs$ be a subring.
	
	We say that $G$ satisfies the \emph{strong Atiyah conjecture over $K$, or $KG$ satisfies the strong Atiyah conjecture} if for every $n\in\naturals$ and every $A\in \Mat_n(KG)$ 
	\begin{equation*}
		\dim_G(\ker(A)):=\tr_G(\pr_{\ker A}) = \sum_{i=1}^n \innerprod{\pr_{\ker A} \delta_1 e_i,\delta_1 e_i}_{l^2G^n} \ \in \frac{1}{\lcm(G)}\integers . 
	\end{equation*}
	Here, as before, we consider $A\colon l^2(G)^n\to l^2(G)^n$ as a bounded operator, acting by left convolution multiplication --- the continuous extension of the left multiplication action on the group ring to $l^2(G)$.
	We denote by $\pr_{\ker A}$ the orthogonal projection onto the kernel.
	Finally, $\delta_1 e_i:= (0,\dots,0,\delta_1,0,\dots,0)^t\in l^2(G)^n$, where $^t$ denotes transpose, is the standard basis element with non-zero entry in the $i$-th column equal to the characteristic function of the neutral element in $l^2G$.
	
	If $G$ contains arbitrarily large finite subgroups, we set $\lcm(G):=+\infty$.
\end{definition}

This conjecture has several important consequences.
The most notable is the Kaplansky conjecture: if $G$ is torsion free (hence $\lcm(G)=1$) the strong Atiyah conjecture over $K$ implies that $KG$ contains no non-trivial zero divisors.
For groups with torsion, knowledge of the strong Atiyah conjecture can still be useful e.g.~to show that $\ker(A)=\{0\}$, it suffices to show that its dimension is $<1/\lcm(G)$.
In particular, as explained in \cite{MR1812434}*{Section 12}, for certain right-angled Coxeter groups, knowledge of the strong Atiyah conjecture can be used to prove the Singer vanishing conjecture.

There is a considerable body of work to establish the strong Atiyah conjecture for suitable classes of groups.
In this paper, we use these results to establish the Atiyah conjecture for certain elementary amenable extensions of right-angled Artin groups, right-angled Coxeter groups and related groups. 

This is based on the following results: 
\begin{enumerate}
	\item \label{item:solvable}
	With a very ring-theoretic approach to extensions by an infinite cyclic groups and induction methods, Linnell \cite{MR1242889} proves the Atiyah conjecture if $G$ is elementary amenable and $\lcm(G)<\infty$, in particular if $G$ is torsion-free solvable, and for arbitrary coefficient rings $K\subset\complexs$.
	\item \label{item:res_solvable}
	In \cite{MR1777117} and more generally in \cite{MR1990479}*{Theorem 1.4} there is established an approximation result for $L^2$-Betti numbers for residually elementary amenable groups in terms of the $L^2$-Betti numbers for these elementary amenable quotients.
	Because a limit of a sequence of integers has to be an integer, this
        implies the strong Atiyah conjecture over $\overline{\rationals}$ for
        residually torsion-free elementary amenable. 
	Here $\overline{\rationals}$ is the field of algebraic
        numbers. Recall that the class of elementary amenable groups is the
        smallest class of groups which contains all finite and abelian groups
        and is closed under extensions, subgroups and directed union. In
        particular, all solvable groups are elementary amenable.
	\item \label{item:linnell_schick}
	Linnell and Schick establish in \cite{MR2328714}*{Theorem 4.60} a
        method which implies that the strong Atiyah conjecture not only holds for a
        group $G$, but also for its elementary amenable extensions, in
        particular all its finite extensions. 
	It is based on the use of Galois cohomology (even a version of the generalized cohomology theory ``stable cohomotopy'') which governs the appearance of excessive torsion in the quotients of the extensions in question.
	\end{enumerate}

The main result of this note is  the following: 
\begin{theorem}
	\label{theo:atiyah_for_coxeter}
	Let $H$ be a right-angled Artin group or the commutator subgroup of a right-angled Coxeter group, and let 
	\begin{equation*}
		1\to H\to G\to Q\to 1 
	\end{equation*}
	be an extension with $Q$ elementary amenable and such that $\lcm(G)<\infty$.
	In the case $H$ is the commutator subgroup of a right-angled Coxeter group, assume that all finite subgroups of $Q$ are $2$-groups.
	Then the group $G$ satisfies the strong Atiyah conjecture over $\overline{\rationals}$.
	
	Note that a right-angled Coxeter group itself is such an extension (with $Q$ a finite 2-group) of its commutator subgroup.
\end{theorem}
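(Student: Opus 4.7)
The strategy is a two-step reduction based on the three ingredients (\ref{item:solvable})--(\ref{item:linnell_schick}) cited above: first establish the strong Atiyah conjecture over $\overline{\rationals}$ for the base group $H$ itself, and then transport it along the extension up to $G$.

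For the first step I would show that $H$ is residually torsion-free nilpotent, so that ingredient (\ref{item:res_solvable}) gives the strong Atiyah conjecture over $\overline{\rationals}$ for $H$. If $H=A_L$ is a right-angled Artin group this is the classical fact of Magnus / Duchamp--Krob that the lower central series of $A_L$ has torsion-free quotients with trivial intersection. If $H=W_L'$ is the commutator subgroup of a right-angled Coxeter group $W_L$, I would first observe that $W_L'$ is torsion-free: every non-trivial torsion element of $W_L$ is conjugate into a spherical parabolic subgroup of the form $(\integers/2)^T$, hence has non-trivial image in the abelianization $W_L^{\ab}\iso(\integers/2)^V$, and so cannot lie in $W_L'$. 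To get the residual property I would exploit that $W_L'$ acts freely and cocompactly on the Davis complex of $W_L$, a CAT(0) cube complex whose walls are permuted by reflections; by Haglund--Wise the resulting quotient cube complex is special, giving an embedding $W_L'\hookrightarrow A_\Gamma$ into a right-angled Artin group. Intersecting $W_L'$ with the lower central series filtration of $A_\Gamma$ yields a residually torsion-free nilpotent filtration, and ingredient (\ref{item:res_solvable}) again applies.

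For the second step I would plug the strong Atiyah conjecture over $\overline{\rationals}$ for $H$ into the Linnell--Schick extension result (ingredient (\ref{item:linnell_schick}), i.e.\ \cite{MR2328714}*{Theorem 4.60}). Since $Q$ is elementary amenable and $\lcm(G)<\infty$ by hypothesis, this machinery directly yields the conjecture over $\overline{\rationals}$ for $G$. The hypothesis that all finite subgroups of $Q$ are $2$-groups in the Coxeter case should enter precisely here: combined with $W_L'$ being torsion-free, it ensures that every finite subgroup of $G$ is a $2$-group, so that the prime-by-prime Galois / stable-cohomotopy analysis underpinning \cite{MR2328714}*{Theorem 4.60} stays within the $2$-primary regime which is the one natural to $W_L$.

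The main obstacle I expect is the Coxeter half of the first step --- producing the embedding $W_L'\hookrightarrow A_\Gamma$, or equivalently the residually torsion-free nilpotent filtration. The RAAG analogue is classical, but for $W_L'$ one genuinely needs the right-angled Davis cubulation plus the Haglund--Wise specialness machinery (or a hands-on cube-complex argument replacing them). The second step, by contrast, should reduce to a direct citation of \cite{MR2328714}*{Theorem 4.60} once the base case is in hand.
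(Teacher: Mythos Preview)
Your two-step outline is right in spirit, but there is a genuine gap in the second step. Citing \cite{MR2328714}*{Theorem~4.60} requires more of $H$ than the strong Atiyah conjecture: one also needs a finite classifying space, cohomological $p$-completeness, and enough torsion-free quotients at each relevant prime $p$ (see Proposition~\ref{prop:AC}). You never verify the latter two, and they are not automatic consequences of residual torsion-free nilpotence. For $A_L$, cohomological completeness is a separate theorem of Lorensen \cite{MR2561762}, and enough torsion-free quotients comes from the torsion-freeness of the lower-central-series factors. For $C_L$ these properties are only established at the prime $2$, and this---not merely the shape of the finite subgroups of $G$---is the actual reason for the $2$-group hypothesis on $Q$. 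The paper obtains them by showing, via the Davis--Januszkiewicz construction, that $C_{\Delta L}$ is normal of index $2^{\abs{S}}$ in the Artin group $A_L$ and that $C_L$ is a retract of $C_{\Delta L}$; both operations preserve cohomological $p$-completeness and enough-torsion-free-quotients (Proposition~\ref{prop:main_of_Linnell_S}).

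Your step~1 for $C_L$ also takes an unnecessary detour. You propose Haglund--Wise specialness to embed $W_L'$ into some $A_\Gamma$, but the same Davis--Januszkiewicz construction already gives $C_L = W_L \cap A_L \subgroup A_L$ directly (Proposition~\ref{prop:groups}\ref{item:CL}), with no cube-complex machinery. So the obstacle you flag as ``main'' is in fact cheap, while the step you treat as a direct citation is where the real work lies.
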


To prove this, we will observe that an old result of Duchamp and Krob
\cite{MR1179860} shows that right-angled Artin groups are residually torsion
free nilpotent, and therefore belong to the class \ref{item:res_solvable}
above. 
Moreover, this  result and a recent result of Lorensen
\cite{MR2561762} allows us to apply the method of \ref{item:linnell_schick}.

For right-angled Coxeter groups, we will combine a construction of Davis and Januszkiewicz \cite{MR1783167} with a slight generalization of \ref{item:linnell_schick}.

\begin{definition}
	\label{def:artincoxeter}
	Let $L$ be a flag simplicial complex with the vertex set $S$.
	Associated to $L$ there are three groups: the right-angled Artin group $A_L$, the right-angled Coxeter group $W_L$, and its commutator subgroup $C_{L}$ defined by 
	\begin{align*}
		A_L& :=\langle S \mid st=ts \;\text{ if $\{s,t\}$ is an edge in }L\rangle,\\
		W_L &:=\langle S\mid st=ts \;\text{ if $\{s,t\}$ is an edge in } L, \text{ and }s^2=1 \text{ for } s\in S\rangle, \\
		C_{L}&:=[W_{L},W_{L}]. 
	\end{align*}
\end{definition}
If $S'$ is another copy of $S$, we denote the element corresponding to $s \in S$ by $s' \in S'$.
Let $L'$ be a copy of $L$ with vertex set $S'$.
The octahedralization $OL$ of $L$ is the subcomplex of the join of $L$ and $L'$ obtained by removing all the edges (and their cofaces) of the form $\{s,s'\}$.
We denote by $\Delta$ the simplex with vertex set $S'$, and define $\Delta L$ to be the subcomplex of the join of $L$ and $\Delta$ obtained by removing all the edges (and their cofaces) of the form $\{s,s'\}$.
Note that the Coxeter group associated to $\Delta$ is $W_{\Delta}=(\mathbb Z/2)^{\abs{S}}$.
We identify $W_{L}$ with the subgroups generated by $S$ in $W_{OL}$ and $W_{\Delta L}$.

Following \cite{MR1783167}, we define four homomorphisms:
\begin{align*}
	\phi\colon W_{\Delta L} &\to W_{\Delta} & \theta\colon W_{\Delta L} &\to W_{\Delta} & \alpha\colon W_{OL } &\to W_{\Delta L} & \beta\colon A_L &\to W_{\Delta L}\\
	s &\mapsto 1 & s &\mapsto s' & s &\mapsto s & s &\mapsto ss'\\
	s' &\mapsto s' & s' &\mapsto s' & s' &\mapsto s'\negthinspace ss' 
\end{align*}
\begin{theorem}[{\cite{MR1783167}*{Theorem on p.~230}\label{thm:dj}}] 
	The maps $\alpha \colon W_{OL }\to \ker(\phi)$ and $\beta \colon A_L\to \ker(\theta)$ are isomorphisms, and we identify $W_{OL }$ and $A_L$ with their images.
	Thus, both $W_{OL }$ and $A_L$ are normal subgroups of $W_{\Delta L}$
        of index $2^{\abs{S}}$.
\end{theorem}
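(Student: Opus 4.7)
The plan is to verify that $\alpha$ and $\beta$ are well-defined homomorphisms, that each lands in the appropriate kernel, and that each is an isomorphism onto that kernel; once this is done the normality and the index $2^{\abs{S}}$ follow from the fact that $\phi,\theta$ are surjections onto $W_{\Delta}=(\mathbb{Z}/2)^{\abs{S}}$.

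For well-definedness of $\beta$, a direct rewriting in $W_{\Delta L}$ gives
\begin{equation*}
\beta(s)\beta(t)\;=\;ss'tt'\;=\;s(s't)t'\;=\;s(ts')t'\;=\;(st)(t's'),
\end{equation*}
so $\beta(s)\beta(t)=\beta(t)\beta(s)$ holds in $W_{\Delta L}$ iff $st=ts$, i.e., iff $\{s,t\}$ is an edge of $L$; these are exactly the defining relations of $A_L$. For $\alpha$, set $\bar{s}:=s'ss'$, an involution of $\ker\phi$, and observe that $W_{\Delta}=\langle s' : s \in S\rangle$ acts on $\ker\phi$ by conjugation with $s'$ swapping $s\leftrightarrow\bar{s}$ and fixing $t,\bar{t}$ for $t\ne s$; using this action one checks case by case that each defining relation of $W_{OL}$ translates into a commutation in $W_{\Delta L}$ that holds precisely when the corresponding pair of vertices is an edge of $L$. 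Containment in the kernels is immediate: $\phi(s)=1$, $\phi(\bar{s})=1$, and $\theta(ss')=s's'=1$.

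For surjectivity, the sections $s'\mapsto s'$ of $\phi$ and $\theta$ split both short exact sequences, yielding $W_{\Delta L}\cong\ker\phi\rtimes W_{\Delta}\cong\ker\theta\rtimes W_{\Delta}$. Each kernel is therefore generated as a subgroup by the $W_{\Delta}$-conjugates of those $W_{\Delta L}$-generators lying in it; these conjugates are $\{s,\bar{s}:s\in S\}=\alpha(S\cup S')$ in the case of $\ker\phi$, and $\{ss',s's:s\in S\}=\{\beta(s)^{\pm 1}\}$ in the case of $\ker\theta$, so $\alpha$ and $\beta$ are surjective.

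For injectivity I would apply Reidemeister--Schreier using the transversal $W_{\Delta}\subset W_{\Delta L}$ to produce explicit presentations of $\ker\phi$ and $\ker\theta$. The RS-generators indexed by $s'\in S'$ vanish because $s'$ itself lies in the transversal, and the relators $s^2$, $s'^2$, $(s't')^2$, $(st')^2$ telescope to trivial RS-relators. Each remaining relator $(st)^2=1$ with $\{s,t\}$ an edge of $L$, conjugated by an arbitrary $g\in W_{\Delta}$ and rewritten, produces in the $\ker\phi$-case the commutations $[u,v]=1$ for all $u\in\{s,\bar{s}\}$, $v\in\{t,\bar{t}\}$, and in the $\ker\theta$-case the single commutation $[ss',tt']=1$; these match exactly the defining relations of $W_{OL}$ and of $A_L$ respectively, which establishes injectivity. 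The main obstacle is this Reidemeister--Schreier bookkeeping: every individual step is elementary, but one must carefully track coset representatives over all $g\in W_{\Delta}$ and verify case by case that no extraneous identifications are produced.
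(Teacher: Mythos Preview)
The paper does not prove this theorem; it is quoted verbatim from Davis--Januszkiewicz \cite{MR1783167} and used as a black box, so there is no in-paper argument to compare against. Your sketch---check that $\alpha,\beta$ respect the defining relations, observe that the obvious section $s'\mapsto s'$ splits both $\phi$ and $\theta$, and then run Reidemeister--Schreier over the transversal $W_\Delta$ to read off a presentation of each kernel---is exactly the strategy of the original Davis--Januszkiewicz proof, and is the natural way to establish the result.

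One caution on the $\alpha$ side: your sentence ``each defining relation of $W_{OL}$ translates into a commutation in $W_{\Delta L}$ that holds precisely when the corresponding pair of vertices is an edge of $L$'' is not quite what you need for well-definedness. For $\alpha$ to be a homomorphism you need that \emph{every} edge $\{u,v\}$ of $OL$ yields $[\alpha(u),\alpha(v)]=1$ in $W_{\Delta L}$, full stop; the ``only if'' direction is irrelevant here and, as stated, would actually obstruct well-definedness for the mixed edges $\{s,t'\}$. When you carry out the case check carefully you should find that $[\,s,\bar t\,]=1$ in $W_{\Delta L}$ exactly when $\{s,t\}$ is an edge of $L$, so the well-definedness of $\alpha$ hinges on the cross-edges of $OL$ matching the edge set of $L$; make sure your description of $OL$ is aligned with this before declaring the case analysis complete. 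The Reidemeister--Schreier portion is routine once the generators are identified, as you say.
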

We need the following properties and relations between the groups just
defined. If $H\subgroup G$ is a subgroup, recall that a \emph{retraction}
$\pi\colon W\to H$ is a 
homomorphism with $\pi|_H=\id_H$, and $H$ is called \emph{retract} of $W$ in
this case.
\begin{proposition}
	\label{prop:groups}~
	\begin{enumerate}
		\item\label{item:CDL}
		$C_{\Delta L}=W_{OL }\cap A_L$ is a normal subgroup both in $A_L$ and $W_{OL }$ of index $2^{\abs{S}}$.
		\item\label{item:CL}
		$C_L= W_L\cap A_L$.
		\item\label{item:retrL}
		$C_{L}$ is a retract of $C_{\Delta L }$.
		\item\label{item:finite}
		$A_{L}$ and $C_{L}$ have finite classifying spaces.
		\item\label{item:lcm}
		$\lcm(A_{L})=\lcm(C_{L})=1$, $\lcm(W_{L})=2^{\dim{L}+1}$.
	\end{enumerate}
\end{proposition}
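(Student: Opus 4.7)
My plan is to proceed item by item, leaning on Theorem \ref{thm:dj} together with a direct analysis of abelianizations and of the Salvetti and Davis complexes.

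For \ref{item:CDL} and \ref{item:CL}, my first observation would be that $(\phi,\theta)\colon W_{\Delta L}\to W_\Delta\times W_\Delta$ is nothing but the abelianization of $W_{\Delta L}$ followed by an isomorphism. Indeed, in $W_{\Delta L}^{\mathrm{ab}}$ every commuting relation becomes trivial, leaving only $s^2=(s')^2=1$, so $W_{\Delta L}^{\mathrm{ab}}=(\integers/2)^{2\abs{S}}$; the assignments $s\mapsto(1,s')$ and $s'\mapsto(s',s')$ then clearly give a bijection of generating sets modulo $2$. Consequently
\[
W_{OL}\cap A_L=\ker\phi\cap\ker\theta=\ker(\phi,\theta)=[W_{\Delta L},W_{\Delta L}]=C_{\Delta L},
\]
which is \ref{item:CDL}; normality is automatic because $C_{\Delta L}$ is characteristic in $W_{\Delta L}$, and the index $2^{\abs{S}}$ in $W_{OL}$ and in $A_L$ follows by comparing with the index $2^{\abs{S}}$ of these subgroups in $W_{\Delta L}$ recorded in Theorem \ref{thm:dj}. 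The very same strategy applied to $\theta|_{W_L}\colon W_L\to W_\Delta$, $s\mapsto s'$, which is exactly the abelianization map $W_L\to W_L^{\mathrm{ab}}=(\integers/2)^{\abs{S}}$, yields $W_L\cap A_L=\ker(\theta|_{W_L})=C_L$, proving \ref{item:CL}.

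For \ref{item:retrL}, I would write down the retraction explicitly by defining a homomorphism $r\colon W_{\Delta L}\to W_L$ on generators via $s\mapsto s$, $s'\mapsto 1$. Checking that $r$ respects the defining relations is essentially automatic: every relation of $W_{\Delta L}$ involving a primed letter becomes trivial, and every relation in the letters of $S$ alone is, by construction of $\Delta L$, a relation of $L$ and hence of $W_L$. Since $r$ lands in $W_L$, it sends $C_{\Delta L}=[W_{\Delta L},W_{\Delta L}]$ into $[W_L,W_L]=C_L$, while restricting to the identity on $W_L\subseteq W_{\Delta L}$ and in particular on $C_L\subseteq C_{\Delta L}$ (the containment $C_L\subseteq C_{\Delta L}$ being immediate from \ref{item:CDL} and \ref{item:CL} together with $W_L\subseteq W_{OL}$). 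Hence $r|_{C_{\Delta L}}$ is the desired retraction.

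For \ref{item:finite} and \ref{item:lcm}, I would invoke the standard constructions: the Salvetti complex is a finite $K(A_L,1)$, and the Davis complex $\Sigma_L$ is a contractible cube complex of dimension $\dim L+1$ on which $W_L$ acts properly and cocompactly with stabilizers the finite special subgroups $W_T$ for simplices $T\subseteq L$. The existence of these classifying spaces gives that $A_L$ is torsion free, and the classification of finite subgroups of $W_L$ (each conjugate to a $(\integers/2)^{\abs{T}}$ for some simplex $T$) gives $\lcm(W_L)=2^{\dim L+1}$. Torsion-freeness of $C_L$ is direct: every involution in $W_L$ has non-trivial image in $W_L^{\mathrm{ab}}$ and so cannot lie in the kernel $C_L$; being torsion free of finite index $2^{\abs{S}}$ in $W_L$, it acts freely and cocompactly on $\Sigma_L$, providing a finite $K(C_L,1)$ and $\lcm(C_L)=1$. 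The step I expect to require the most care is \ref{item:retrL}: once the formula for $r$ is written down the verification is routine, but the existence of this retraction is what distinguishes the Coxeter-commutator case from a bare subgroup inclusion and will later be essential for importing the Atiyah conjecture from $C_{\Delta L}$ down to $C_L$.
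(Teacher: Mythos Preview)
Your proof is correct and follows essentially the same approach as the paper: identifying $(\phi,\theta)$ with the abelianization of $W_{\Delta L}$ for \ref{item:CDL}, restricting $\theta$ to $W_L$ for \ref{item:CL}, using the natural retraction $s\mapsto s$, $s'\mapsto 1$ for \ref{item:retrL}, and appealing to the Salvetti and Davis complexes for \ref{item:finite} and \ref{item:lcm}. The paper is terser---it handles \ref{item:finite} and \ref{item:lcm} with a single citation to Davis's book---but the content is the same.
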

\begin{proof}
	The direct product of the homomorphisms $\phi$ and $\theta$ maps $W_{\Delta L}$ into $(\mathbb{Z}/2)^{2\abs{S}}$ and is easily seen to be the abelianization map.
	Thus \ref{item:CDL} follows from Theorem \ref{thm:dj}.
	Similarly \ref{item:CL} follows, since $\theta$ restricts to the abelianization map on $W_{L}$.
	To prove \ref{item:retrL} we note that the natural retraction $W_{\Delta L} \to W_{L}$, $ s \mapsto s$, $s' \mapsto 1$ restricts to the retraction of the commutator subgroups.
	\ref{item:finite} and \ref{item:lcm} are standard facts, see, for example, \cite{MR2360474}.
\end{proof}

We now have to establish the very slight generalization of the method of \cite{MR2328714} alluded to in \ref{item:linnell_schick}.
To begin with, recall the following \cite{MR2328714}*{Definition 4.3}.
\begin{definition}
	Let $p$ be a prime number.
	A discrete group $G$ is called \emph{cohomologically $p$-complete} if the canonical homomorphism 
	\begin{equation*}
		\hat H^*(\hat G^p;\integers/p\integers):=\lim_{[G:H]=p^n} H^*(G/H;\integers/p\integers) \to H^*(G;\integers/p\integers) 
	\end{equation*}
	is an isomorphism.
	Here, the direct limit is taken over the directed system of finite $p$-group quotients of $G$.
	This limit is by definition the Galois cohomology of the pro-$p$-completion $\hat G^p$ of $G$.
	
	$G$ is called \emph{cohomologically complete} if it is cohomologically $p$-complete for every $p$.
\end{definition}
Slightly specializing \cite{MR2328714}*{Definition 4.53}, we also define 
\begin{definition}
	\label{def:enough_quotients}
	\emph{$G$ has enough torsion-free quotients for the prime $p$} if for each normal subgroup $V$ of $G$ with $G/V$ a finite $p$-group, there exists a subgroup $U\subgroup V $ normal in $G$ with $G/U$ torsion-free and elementary amenable.
	If this holds for every $p$, we say that \emph{$G$ has enough torsion-free quotients}.
\end{definition}
Since subgroups of torsion-free elementary amenable groups are also torsion-free and elementary amenable, the above definition is equivalent to saying that every epimorphism from $G$ onto a finite $p$-group factors through a torsion-free elementary amenable group.

The next proposition shows that these properties are inherited by normal subgroups of $p$-power index, as well as retracts.
\begin{proposition}
	\label{prop:main_of_Linnell_S}
	Let $p$ be a prime number and let $H$ be a subgroup of $G$.
	\begin{enumerate}
		\item \label{item:cohom_complete_to_subgrp}
		Suppose $H$ is normal and the index of $H$ in $G$ is a power of $p$.
		If $G$ is cohomologically $p$-complete, then $H$ is cohomologically $p$-complete.
		If $G$ has enough torsion-free quotients for the prime $p$, then so has $H$.
		\item\label{item:retract}
		Suppose that $H$ is a retract of $G$.
		If $G$ is cohomologically $p$-complete, then $H$ is cohomologically $p$-complete.
		If $G$ has enough torsion-free quotients for the prime $p$, then so has $H$.
	\end{enumerate}
\end{proposition}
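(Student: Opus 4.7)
The plan is to handle the two parts of the proposition separately, treating the retract case first because it is formally cleaner.

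For part~(\ref{item:retract}), let $\pi\colon G\to H$ be the retraction and $i\colon H\into G$ the inclusion, so $\pi\circ i=\id_H$. Both maps carry normal subgroups of $p$-power index to normal subgroups of $p$-power index (pulling back via $\pi$, intersecting with $H$ via $i$), hence induce continuous homomorphisms $\hat\pi^p\colon \hat G^p\to\hat H^p$ and $\hat i^p\colon \hat H^p\to\hat G^p$ still satisfying $\hat\pi^p\circ\hat i^p=\id$. By naturality of the completion map, the comparison map $\hat H^*(\hat H^p;\integers/p\integers)\to H^*(H;\integers/p\integers)$ is then a retract of $\hat H^*(\hat G^p;\integers/p\integers)\to H^*(G;\integers/p\integers)$, and a retract of an isomorphism is an isomorphism. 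For the torsion-free quotient property, given $V\normalsubgroup H$ with $H/V$ a finite $p$-group, I would set $V':=\pi^{-1}(V)\normalsubgroup G$, observe that $G/V'\iso H/V$ is a finite $p$-group via the map induced by $\pi$, apply the hypothesis on $G$ to obtain $U'\subgroup V'$ normal in $G$ with $G/U'$ torsion-free elementary amenable, and put $U:=U'\cap H$. Then $U$ is normal in $H$, contained in $V'\cap H=V$, and $H/U$ embeds in $G/U'$, inheriting the required properties.

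For part~(\ref{item:cohom_complete_to_subgrp}) I first dispose of the torsion-free quotient property. Given $V\normalsubgroup H$ with $H/V$ a finite $p$-group, I pass to the normal core $V^*:=\bigcap_{g\in G}gVg^{-1}$. Conjugation permutes the conjugates $gVg^{-1}$, each a normal subgroup of $H$ of the same $p$-power index; since $[G:H]<\infty$ only finitely many distinct ones occur, so $H/V^*$ embeds in a finite product of finite $p$-groups, and $G/V^*$ (an extension of $G/H$ by $H/V^*$) is a finite $p$-group. The hypothesis on $G$ then produces $U\subgroup V^*\subgroup V$ normal in $G$ with $G/U$ torsion-free elementary amenable, and $H/U\subgroup G/U$ inherits both properties. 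The cohomological $p$-completeness half is more delicate. The key preliminary observation is that the pro-$p$-topology on $G$ restricts to the pro-$p$-topology on $H$: every $p$-power-index normal subgroup of $H$ contains its normal core in $G$, which is itself of $p$-power index in $G$ by the argument above. Combined with the fact that $G/H$ already coincides with its own pro-$p$-completion, this gives a short exact sequence of pro-$p$-groups
\begin{equation*}
1\to\hat H^p\to\hat G^p\to G/H\to 1.
\end{equation*}
I would then compare the Lyndon--Hochschild--Serre spectral sequences in $\integers/p\integers$-cohomology for this sequence and for $1\to H\to G\to G/H\to 1$, related by the natural map induced by $\hat H^p\to H$ and the identity on $G/H$. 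The hypothesis identifies the abutments. Induction on the row index $t$ of $E_2$ then propagates: assuming the comparison is iso on $E_2^{s,j}$ for $j<t$ and all $s$, the same holds for the corresponding $E_r$- and $E_\infty$-terms; comparing with the iso on the abutment in total degree $t$ forces $E_\infty^{0,t}$ to be iso, and since every differential out of $E_r^{0,t}$ lands in an already iso-ified region of the spectral sequence, a diagram chase promotes this back to $E_2^{0,t}=\hat H^t(\hat H^p;\integers/p\integers)\to H^t(H;\integers/p\integers)$.

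The main obstacle is precisely this spectral-sequence comparison: one must carefully verify that $\hat H^p$ is really the kernel of $\hat G^p\onto G/H$ (the normal-core argument is essential here) and that the inductive step actually extracts the coefficient isomorphism from the abutment isomorphism. The arguments for part~(\ref{item:cohom_complete_to_subgrp}) follow those of \cite{MR2328714}; the only genuine novelty in the present proposition is the retract argument of part~(\ref{item:retract}), which once unpacked is essentially formal.
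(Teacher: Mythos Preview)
Your proposal is correct and follows essentially the same approach as the paper: the normal-core argument for torsion-free quotients in part~(\ref{item:cohom_complete_to_subgrp}) and the functoriality/retract argument for both halves of part~(\ref{item:retract}) match the paper's proof almost verbatim. The only difference is that for the cohomological $p$-completeness in part~(\ref{item:cohom_complete_to_subgrp}) the paper simply cites \cite{MR2328714}*{Proposition~4.16} rather than sketching the spectral-sequence comparison you outline.
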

\begin{proof}
	The first assertion of \ref{item:cohom_complete_to_subgrp} is \cite{MR2328714}*{Proposition 4.16}.
	
	The second assertion follows from \cite{MR2328714}*{Lemma 4.11 (2)}: if $V\normalsubgroup H$ with $H/V$ a finite $p$-group, then the intersection $V_{0}$ of all $G$-conjugates of $V$ is a subgroup of $H$ normal in $G$ with $H/V_0$, and therefore $G/V_0$, a finite $p$-group.
	Thus, by assumption, there exists a subgroup $U$ of $V_{0}$ normal in $G$ with $G/U$ torsion-free elementary amenable.
	Then $U$ is normal in $H$, and $H/U$ is torsion-free and elementary amenable.
	
	To prove \ref{item:retract}, observe that the Galois cohomology of the pro-$p$-completion, as well as the usual group cohomology, are contravariant functors.
	Therefore, since $H$ is a retract of $G$, the comparison map between $H^n(H;\integers/p)$ and $\hat H^n(\hat H^{p};\integers/p)$ is a direct summand of the comparison map between $H^n(G;\integers/p)$ and $\hat H^n(\hat G^{p};\integers/p)$.
	Thus, if the latter is an isomorphism, then so is the former.
	
	For the torsion-free quotients, let $q\colon H\onto B$ be an epimorphism onto a finite $p$-group $B$ and let $\pi\colon G\to H$, $\pi|_{H}=\id_H$ denote the retraction.
	Then, by assumption, the composition $G\xrightarrow{\pi}H\to B$ factors through a torsion-free elementary amenable group.
	Since the composition agrees with $q$ on $H$, the restriction of this factorization to $H$ gives the desired factorization of $q$.
\end{proof}
\newpage
\begin{proposition}\nopagebreak ~
	\label{prop:prop_of_C}\nopagebreak
	\begin{enumerate}
		\item\label{item:Artin}
		A right-angled Artin group $A_{L}$ is residually torsion-free nilpotent, cohomologically complete, and has enough torsion-free quotients.
		
		\item\label{item:comm}
		The commutator subgroup $C_L$ of a right-angled Coxeter group $W_L$ is residually torsion-free nilpotent, cohomologically $2$-complete, and has enough torsion-free quotients for the prime $2$.
	\end{enumerate}
\end{proposition}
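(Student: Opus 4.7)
The plan is to prove Part (1) from the Duchamp--Krob theorem together with the theorem of Lorensen, and then deduce Part (2) by chaining the transfer results from Proposition \ref{prop:main_of_Linnell_S} along the inclusions and retractions furnished by Proposition \ref{prop:groups}.

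\emph{Part (1).} By Duchamp--Krob \cite{MR1179860}, the successive quotients $\gamma_n A_L / \gamma_{n+1} A_L$ of the lower central series of $A_L$ are torsion-free (finitely generated abelian), and $\bigcap_n \gamma_n A_L = 1$. Hence each $A_L/\gamma_n A_L$ is torsion-free nilpotent, and residual torsion-free nilpotence follows immediately. For enough torsion-free quotients at any prime $p$: given $V \normalsubgroup A_L$ with $A_L/V$ a finite $p$-group, this quotient is nilpotent of some class $c$, so $\gamma_{c+1} A_L \subseteq V$; taking $U := \gamma_{c+1} A_L$ gives a normal subgroup contained in $V$ with $A_L/U$ torsion-free nilpotent, hence torsion-free elementary amenable. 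The substantial ingredient is cohomological completeness of $A_L$, which I plan to import directly from Lorensen \cite{MR2561762}.

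\emph{Part (2).} First, $C_L = W_L \cap A_L$ by Proposition \ref{prop:groups}(\ref{item:CL}), so $C_L$ embeds into $A_L$ and therefore inherits residual torsion-free nilpotence from Part (1) (intersecting a residual filtration with a subgroup gives a residual filtration). Next, by Proposition \ref{prop:groups}(\ref{item:CDL}), $C_{\Delta L}$ is normal in $A_L$ of index $2^{\abs{S}}$, a power of $2$; so Proposition \ref{prop:main_of_Linnell_S}(\ref{item:cohom_complete_to_subgrp}) applied with $p=2$ and the results of Part (1) for $A_L$ show that $C_{\Delta L}$ is cohomologically $2$-complete and has enough torsion-free quotients for the prime $2$. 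Finally, $C_L$ is a retract of $C_{\Delta L}$ by Proposition \ref{prop:groups}(\ref{item:retrL}), so Proposition \ref{prop:main_of_Linnell_S}(\ref{item:retract}) transfers both properties from $C_{\Delta L}$ down to $C_L$.

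The only non-routine step is the appeal to Lorensen for the cohomological completeness of right-angled Artin groups; everything else is formal bookkeeping with the results already established in the paper. The key conceptual observation is that a residually torsion-free nilpotent group automatically has enough torsion-free quotients, because every finite $p$-quotient is nilpotent and is therefore killed by a finite stage of the (torsion-free) lower central series filtration.
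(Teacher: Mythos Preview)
Your proposal is correct and follows essentially the same route as the paper's own proof: Duchamp--Krob for residual torsion-free nilpotence of $A_L$ (and hence of its subgroup $C_L$), the observation that finite $p$-groups are nilpotent to obtain enough torsion-free quotients via the lower central series, Lorensen for cohomological completeness of $A_L$, and then the chain $A_L \supgroup C_{\Delta L} \onto C_L$ via Proposition~\ref{prop:groups}\ref{item:CDL},\ref{item:retrL} combined with the transfer results of Proposition~\ref{prop:main_of_Linnell_S}. Your write-up is in fact slightly more explicit than the paper's about why $\gamma_{c+1}A_L\subseteq V$ works as the required $U$.
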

\begin{proof}
	By \cite{MR1179860}*{Theorem 2.1} the terms of the lower central series of $A_L$ intersect in the identity and have torsion-free factors.
	Thus $A_{L}$ is a residually torsion-free nilpotent group, and, since by Proposition \ref{prop:groups}\ref{item:CL} $C_{L}\subgroup A_{L}$, so is $C_{L}$.
Also, because infinitely many terms of the lower central series of
$A_L$ have torsion-free factors and because finite $p$-groups are
nilpotent, it follows that $A_L$ has enough torsion-free quotients,
cf.~\cite{MR2328714}*{Example 4.56}.
Furthermore $A_L$ is cohomologically complete by \cite{MR2561762}*{Theorem 2.6}.
	Therefore, by Propositions \ref{prop:groups}\ref{item:CDL} and \ref{prop:main_of_Linnell_S}\ref{item:cohom_complete_to_subgrp}, $C_{\Delta L }$ is cohomologically $2$-complete and has enough torsion-free quotients for the prime $2$.
	Finally, by Propositions \ref{prop:groups}\ref{item:retrL} and \ref{prop:main_of_Linnell_S}\ref{item:retract} its retract $C_L$ also has these properties.
\end{proof}
	
\begin{proposition}
	\label{prop:AC}
	Assume the following: 
	\begin{itemize}
		\item $1\to H\to G\to Q\to 1$ is an exact sequence of groups 
		\item $H$ has a finite classifying space, is cohomologically $p$-complete and has enough torsion-free quotients at the prime $p$.				
		\item $K\subset \mathbb{C}$ is a subfield fixed by complex conjugation and $KH$ satisfies the strong Atiyah conjecture 
		\item $Q$ is elementary amenable and every finite subgroup of $Q$ is a $p$-group 
		\item $\lcm(G) < \infty$.
	\end{itemize}
	Then $KG$ also satisfies the strong Atiyah conjecture.

	Note that the last two conditions are satisfied if $Q$ is a finite $p$-group.
\end{proposition}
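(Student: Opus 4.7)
The plan is to adapt the proof of \cite{MR2328714}*{Theorem 4.60} to the single-prime setting. That theorem establishes the same conclusion under the stronger hypothesis that $H$ is cohomologically complete and has enough torsion-free quotients for \emph{every} prime; we assume these properties only at the prime $p$ and compensate by requiring the finite subgroups of $Q$ to be $p$-groups. The point is that since $H$ is torsion-free (it has a finite classifying space), every finite subgroup of $G$ injects into $Q$ and is therefore a $p$-group, so only $p$-primary information about $H$ is invoked in the argument of \cite{MR2328714}*{\S 4}.

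As a preliminary reduction, I would pass to the case that $Q$ is finitely generated: since $Q$ is elementary amenable it is the directed union of its finitely generated subgroups $Q_i$, each preimage $G_i \subset G$ satisfies all hypotheses of the proposition, and $\dim_G\ker(A) = \dim_{G_i}\ker(A)$ for any $A$ already defined over $G_i$. Next I would construct a cofinal inverse system of normal subgroups $U \triangleleft G$ contained in $H$ with $H/U$ torsion-free elementary amenable: starting from a finite $p$-group quotient of $H$, refine it via the enough-torsion-free-quotients property at $p$ to an $H$-normal subgroup with torsion-free elementary amenable quotient, and pass to the $G$-core by the standard argument (cf.\ the proof of Proposition~\ref{prop:main_of_Linnell_S}\ref{item:cohom_complete_to_subgrp}). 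Each $G/U$ is then elementary amenable with $\lcm(G/U)\mid \lcm(G)<\infty$, so by Linnell's theorem (item~\ref{item:solvable}) $K[G/U]$ satisfies the strong Atiyah conjecture, and in particular $\dim_{G/U}\ker(\pi_U A) \in \tfrac{1}{\lcm(G)}\integers$ for every $A\in\Mat_n(KG)$.

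The main obstacle, exactly as in \cite{MR2328714}, is the passage to the limit: one must show that $\dim_G\ker(A) = \lim_U \dim_{G/U}\ker(\pi_U A)$ so that the integrality of the approximants is inherited. This is the content of the stable-cohomotopy argument of \cite{MR2328714}*{\S 4}, which uses the cohomological $p$-completeness of $H$ (together with its finite classifying space) to control idempotents in $\Mat_n(KG)$ through the Galois-theoretic data of the inverse system of finite $p$-group quotients of $H$. In \cite{MR2328714} this input is required at every prime simultaneously; under the present hypothesis on $Q$, the $p'$-part of $\lcm(G)$ equals $\lcm(H)=1$, so the argument reduces to its single-prime form. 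Verifying that the machinery of \cite{MR2328714}*{\S 4} runs with only the prime $p$ in play---essentially restricting throughout to Sylow $p$-information on the finite subgroups of $G$ and its quotients---is the only new content of the proof and is where the careful bookkeeping is needed.
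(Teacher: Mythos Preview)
Your high-level strategy---defer to \cite{MR2328714}*{Theorem 4.60, Corollary 4.62} and observe that only the prime $p$ is invoked when all finite subgroups of $Q$ are $p$-groups---is exactly the paper's proof, which consists of nothing more than that observation.

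However, your more detailed sketch misplaces the role of cohomological $p$-completeness. In your second paragraph you assert that ``each $G/U$ is then elementary amenable with $\lcm(G/U)\mid \lcm(G)$'' as though this were automatic, and then in the third paragraph you identify the main obstacle as the passage to the limit $\dim_G\ker(A)=\lim_U\dim_{G/U}\ker(\pi_U A)$. It is the other way around. Controlling the torsion in the quotients $G/U$---ensuring that no finite subgroups appear in $G/U$ beyond those already lifting to $G$---is precisely what the stable-cohomotopy/Galois-cohomology machinery of \cite{MR2328714}*{\S4} is built to do, and this is where cohomological $p$-completeness together with the finite classifying space hypothesis actually enter (compare the paper's own summary in item~\ref{item:linnell_schick}: the method ``governs the appearance of excessive torsion in the quotients''). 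Without that input your claim $\lcm(G/U)\mid\lcm(G)$ is unjustified, and Linnell's theorem applied to $G/U$ only yields values in $\frac{1}{\lcm(G/U)}\integers$ with an a priori uncontrolled denominator. So while you have correctly identified all the ingredients and the single-prime reduction, the careful bookkeeping you flag as ``the only new content'' lives in the torsion-control step, not in the limit step.
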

\begin{proof}
	This proposition corresponds to the assertions of \cite{MR2328714}*{Theorem 4.60, Corollary 4.62}.
	The only difference is that instead of cohomological $q$-com\-pleteness for all primes $q$ we assume that all finite subgroups of $Q$ are $p$-groups.
	However, the proof of \cite{MR2328714}*{Theorem 4.60} is done one prime at a time and indeed establishes exactly the statement made here.
\end{proof}

Finally, we are ready to prove the main result.
\begin{proof}
	[Proof of Theorem \ref{theo:atiyah_for_coxeter}.] By Proposition \ref{prop:prop_of_C} $A_L$ and $C_L$ are residually torsion-free nilpotent groups and therefore satisfy the strong Atiyah conjecture over $\overline{\mathbb{Q}}$.
	
	Now the theorem follows immediately from \cite{MR2328714}*{Theorem 4.60, Corollary 4.62} and Proposition \ref{prop:AC}, as their remaining assumptions  are satisfied by Proposition \ref{prop:prop_of_C} and Proposition \ref{prop:groups}\ref{item:finite}.
\end{proof}

\begin{bibdiv}
  \begin{biblist}
\bib{Atiyah}{article}{
   author={Atiyah, M. F.},
   title={Elliptic operators, discrete groups and von Neumann algebras},
   conference={
      title={Colloque ``Analyse et Topologie'' en l'Honneur de Henri Cartan
      (Orsay, 1974)},
   },
   book={
      publisher={Soc. Math. France},
      place={Paris},
   },
   date={1976},
   pages={43--72. Ast\'erisque, No. 32-33},
   review={\MR{0420729 (54 \#8741)}},
}

\bib{Austin}{unpublished}{
  author={Austin, Tim},
  title={Rational group ring elements with kernels having irrational
    dimension},
  date={2009},
  note={arXiv:0909.2360},
}

\bib{MR2360474}{book}{
   author={Davis, Michael W.},
   title={The geometry and topology of Coxeter groups},
   series={London Mathematical Society Monographs Series},
   volume={32},
   publisher={Princeton University Press},
   place={Princeton, NJ},
   date={2008},
   pages={xvi+584},
   isbn={978-0-691-13138-2},
   isbn={0-691-13138-4},
   review={\MR{2360474 (2008k:20091)}},
}
		
\bib{MR1783167}{article}{
   author={Davis, Michael W.},
   author={Januszkiewicz, Tadeusz},
   title={Right-angled Artin groups are commensurable with right-angled
   Coxeter groups},
   journal={J. Pure Appl. Algebra},
   volume={153},
   date={2000},
   number={3},
   pages={229--235},
   issn={0022-4049},
   review={\MR{1783167 (2001m:20056)}},
   doi={10.1016/S0022-4049(99)00175-9},
}

\bib{MR1812434}{article}{
      author={Davis, Michael~W.},
      author={Okun, Boris},
       title={Vanishing theorems and conjectures for the {$\ell^2$}-homology of
  right-angled {C}oxeter groups},
        date={2001},
        ISSN={1465-3060},
     journal={Geom. Topol.},
      volume={5},
       pages={7\ndash 74},
         url={http://dx.doi.org/10.2140/gt.2001.5.7},
      review={\MR{1812434 (2002e:58039)}},
}

\bib{MR1990479}{article}{
   author={Dodziuk, J{\'o}zef},
   author={Linnell, Peter},
   author={Mathai, Varghese},
   author={Schick, Thomas},
   author={Yates, Stuart},
   title={Approximating $L^2$-invariants and the Atiyah conjecture},
   note={Dedicated to the memory of J\"urgen K. Moser},
   journal={Comm. Pure Appl. Math.},
   volume={56},
   date={2003},
   number={7},
   pages={839--873},
   issn={0010-3640},
   review={\MR{1990479 (2004g:58040)}},
   doi={10.1002/cpa.10076},
}

\bib{MR1179860}{article}{
   author={Duchamp, G.},
   author={Krob, D.},
   title={The lower central series of the free partially commutative group},
   journal={Semigroup Forum},
   volume={45},
   date={1992},
   number={3},
   pages={385--394},
   issn={0037-1912},
   review={\MR{1179860 (93e:20047)}},
   doi={10.1007/BF03025778},
}
		
\bib{Grabowski}{unpublished}{
  author={Grabowski, Lukasz},
  title={On Turing machines, dynamical systems and the Atiyah problem},
  date={2010},
  note={arXiv:1004.2030}
}

\bib{MR1242889}{article}{
   author={Linnell, Peter A.},
   title={Division rings and group von Neumann algebras},
   journal={Forum Math.},
   volume={5},
   date={1993},
   number={6},
   pages={561--576},
   issn={0933-7741},
   review={\MR{1242889 (94h:20009)}},
   doi={10.1515/form.1993.5.561},
}
		
\bib{MR2328714}{article}{
   author={Linnell, Peter},
   author={Schick, Thomas},
   title={Finite group extensions and the Atiyah conjecture},
   journal={J. Amer. Math. Soc.},
   volume={20},
   date={2007},
   number={4},
   pages={1003--1051 (electronic)},
   issn={0894-0347},
   review={\MR{2328714 (2008m:58041)}},
   doi={10.1090/S0894-0347-07-00561-9},
}

\bib{MR2561762}{article}{
   author={Lorensen, Karl},
   title={Groups with the same cohomology as their pro-$p$ completions},
   journal={J. Pure Appl. Algebra},
   volume={214},
   date={2010},
   number={1},
   pages={6--14},
   issn={0022-4049},
   review={\MR{2561762 (2010k:20088)}},
   doi={10.1016/j.jpaa.2009.04.002},
}
		
\bib{Pichot-Schick-Zuk}{unpublished}{
  author={Pichot, Mika\"el},
  author={Schick, Thomas },
  author={Zuk, Andrzej},
  title={Closed manifolds with transcendental L2-Betti numbers},
  year={2010},
  note={arXiv:1005.1147},
}

\bib{MR1777117}{article}{
   author={Schick, Thomas},
   title={Integrality of $L^2$-Betti numbers},
   journal={Math. Ann.},
   volume={317},
   date={2000},
   number={4},
   pages={727--750},
   issn={0025-5831},
   review={\MR{1777117 (2002k:55009a)}},
   doi={10.1007/PL00004421},
  note={\textit{Erratum} in vol.~\textbf{322}, 421--422}
}

  \end{biblist}
\end{bibdiv}

\end{document}